\documentclass[12pt,regno]{amsart}
\usepackage{amsfonts,amsmath,amssymb,amsbsy,amstext,amsthm}
\usepackage{color,enumerate,float,fullpage,verbatim}
\usepackage[shortlabels]{enumitem}
\usepackage{euler, amsfonts, amssymb, latexsym, epsfig,epic}

\usepackage{url}
\usepackage[colorlinks=true,hyperindex, linkcolor=Mahogany, pagebackref=false, citecolor=NavyBlue]{hyperref}
\usepackage[dvipsnames]{xcolor}
\usepackage{verbatim}
\usepackage{color}
\usepackage[all]{xy}  
\usepackage{mathrsfs}

\usepackage{stmaryrd}

\theoremstyle{definition}
\newtheorem{theorem}{Theorem}[section]
\newtheorem{theoremx}{Theorem}

\numberwithin{equation}{section}

\newtheorem*{theorem*}{Theorem}

\newtheorem{corollary}[theorem]{Corollary}
\newtheorem{lemma}[theorem]{Lemma}
\newtheorem{proposition}[theorem]{Proposition}

\newtheorem*{claim*}{Claim}

\theoremstyle{definition}
\newtheorem{definition}[theorem]{Definition}

\newtheorem{conjecture}[theorem]{Conjecture}
\newtheorem{remark}[theorem]{Remark}

\newcommand{\e}{\operatorname{e}}

\newtheoremstyle{TheoremNum}
        {8pt}{8pt}              
        {\upshape}                      
        {}                              
        {\bfseries}                     
        {.}                             
        {.5em}                             
        {\thmname{#1}\thmnote{ \bfseries #3}}
  \theoremstyle{TheoremNum}

\newcommand{\RR}{\mathbb{R}}
\newcommand{\NN}{\mathbb{N}}
\newcommand{\ZZ}{\mathbb{Z}}

\newcommand{\CC}{\mathbb{C}}

\newcommand{\IN}{\operatorname{in}}

\newcommand{\Ass}{\operatorname{Ass}}

\newcommand{\Ht}{\operatorname{ht}}

\newcommand{\HF}{\operatorname{HF}}
\newcommand{\HS}{\operatorname{HS}}

\renewcommand{\b}{\mathfrak{b}}

\newcommand{\ul}[1]{\underline{#1}}

\newcommand{\p}{\mathfrak{p}}
\newcommand{\q}{\mathfrak{q}}

\newcommand{\ps}[1]{\llbracket {#1} \rrbracket}

\renewcommand{\a}{\mathfrak{a}}

\renewcommand{\leq}{\leqslant}
\renewcommand{\geq}{\geqslant}

\newcommand{\LPP}{\operatorname{LPP}}

\DeclareMathOperator{\Tor}{Tor}

\newcommand{\LL}{\mathbb{L}}
\newcommand{\f}{\mathfrak{f}}

\title[]{Quadratic linear strands of prime ideals}
\author{Giulio Caviglia}
\address{Department of Mathematics, Purdue University, 150 N. University Street, West Lafayette, IN 47907-2067, USA}
\email{gcavigli@purdue.edu}
\author{Alessandro De Stefani}
\address{Dipartimento di Matematica, Universit{\`a} di Genova, Via Dodecaneso 35, 16146 Genova, Italy}
\email{alessandro.destefani@unige.it}

\subjclass[2020]{Primary: 13D02, Secondary: 13P10, 13D07}
\keywords{Syzygies, prime ideals, Betti numbers, resolutions}

\begin{document}

\begin{abstract}
We prove sharp estimates on the quadratic strand of the resolution of any homogeneous prime ideal in a standard graded polynomial ring over an arbitrary field. Our bounds only depend on the height of the prime ideal, and they are optimal since for every $h \geq 1$ we show that there exists a prime ideal of height $h$ achieving them. In particular, we show that a prime ideal of height $h$ can contain at most $h^2$ quadratic minimal generators, and that there exists a prime ideal minimally generated by $h^2$ quadrics.
\end{abstract} 

\maketitle

\section{Introduction}

Sharp bounds on graded Betti numbers of algebraic varieties, valid over an arbitrary field and depending on the smallest possible amount of input data, are rarely found across commutative algebra and algebraic geometry. This paper establishes such bounds for the entire quadratic linear strand of the minimal free resolution of any homogeneous prime ideal, with the height as the only input. Further, we prove that these bounds are simultaneously optimal across the whole strand. 

Given their fundamental role in the theory of commutative rings and algebraic varieties, 
prime ideals are often expected to satisfy better properties compared to general ideals in terms of singularities and numerical invariants that measure complexity. Two notable examples of such expectations are the Eisenbud-Goto conjecture relating the Castelnuovo-Mumford regularity, the multiplicity and the height of a prime ideal \cite{EG}, and Green's conjecture on vanishing of Betti numbers of canonical curves in relation to their Clifford index \cite[Conjecture 5.1]{Green}. In \cite{McP}, McCullough and Peeva provide a series of counterexamples to the Eisenbud-Goto conjecture, showing that it can fail quite spectacularly. On the other hand, Green's conjecture has been proved for generic curves by Voisin in characteristic zero \cite{Voisin1,Voisin2} and in certain positive characteristics \cite{AFPRW,RaicuSam}, while it is still wide open in the case of arbitrary curves. 

Another direction of research is to find bounds on certain numerical invariants only relying on partial initial data. This is the case, for instance, of Stillman's question: can one bound the projective dimension (or, equivalently, the Castelnuovo-Mumford regularity) of a homogeneous ideal in a standard graded polynomial ring only in terms of the degrees of its generators? This has been answered affirmatively by Ananyan and Hochster \cite{AH} with the use of very powerful new techniques and a clever inductive method. While the existence of such a bound is remarkable, the nature itself of their proof can only provide bounds which are far from being sharp.

Our main results stand in stark contrast: not only do we establish polynomial bounds depending solely on the height, but these bounds are optimal.

Let $K$ be a field, $S=K[x_1,\ldots,x_N]$ be a polynomial ring endowed with the standard grading, and $P$ be a homogeneous prime ideal containing no linear forms. Let $h$ denote its height, and let $\beta_{i,j}(P) = \dim_K(\Tor_i(P,K)_j)$ denote its $(i,j)$-th graded Betti number. When $K$ is an \emph{algebraically closed field}, a classical result attributed to Guido Castelnuovo establishes that the maximal number of linearly independent quadrics contained in $P$ is $\binom{h+1}{2}$. This bound is remarkable, both because it does not depend on the number of variables of $S$, and because over algebraically closed fields it is sharp, achieved by varieties of minimal degree. An extension of Castelnuovo's estimate to the whole quadratic strand of the resolution was achieved in \cite{HanKwak} (see also \cite{HanKwakLee}); however, in light of Corollary \ref{corollPrime}, these results still crucially require $K$ to be algebraically closed. 

Now let $K$ be \emph{any field}, and $P$ and $h$ be as above. In \cite[Theorem 4.8]{CDS_Prime} we proved that each graded Betti number $\beta_{i,j}(P)$ can be bounded above by a double exponential function depending on the parameters $h,i$ and $j$. Performing a finer analysis on minimal generators of low degree, we also showed that $P$ can contain at most $2h^2+h$ linearly independent quadrics \cite[Theorem 4.6]{CDS_Prime}. We remark that even the existence of a general bound on $\beta_{0,2}(P)$ which does not require $K$ to be algebraically closed and only depends quadratically on $h$ is, by itself, noteworthy. Moreover, we  note that our bounds on graded Betti numbers hold, more generally, for unmixed radical ideals, not necessarily primes. 

In the process of exploring the upper bound for quadrics, in \cite[Example 4.3]{CDS_Prime} we considered the defining ideal of the union of two disjoint linear spaces of codimension $h$; we will denote such an ideal by $\Gamma(h)$, for future reference (see also Subsection \ref{Subsection radical}). Note that $\Gamma(h)$ is an unmixed radical ideal of height $h$ with $h^2$ minimal quadratic generators. In light of the larger bound $\beta_{0,2}(I) \leq 2h^2+h$ produced in \cite{CDS_Prime}, one may expect to find examples with more than $h^2$ minimal quadratic generators. However, the first main goal of this article is to show that not only the minimal generators, but all the graded Betti numbers in the quadratic strand of $\Gamma(h)$ are maximal within the class of unmixed radical ideals of height $h$.

\begin{theoremx}[Theorem \ref{mainthm} \& Remark \ref{remLinear}] \label{mainA} Let $K$ be any field, and $S$ be a standard graded polynomial ring over $K$. If $I \subseteq S$ is a homogeneous unmixed radical ideal of height $h$ (for instance, a prime), then 
\[
\beta_{i,i+2}(I) \leq \binom{2h}{i+2} - 2 \binom{h}{i+2}
\]
for all $i \geq 0$. In particular, $I$ has at most $h^2$ minimal quadratic generators.
\end{theoremx}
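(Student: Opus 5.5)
The plan is to first reduce to a statement about the degree-$2$ component of $I$ alone, and then to bound that component by comparison with the extremal ideal $\Gamma(h)$. Since $I$ contains no linear forms, the Koszul complex shows that $\beta_{i,i+2}(I)=\dim_K H_{i+1}(\underline{x};S/I)_{i+2}$ is the homology of
\[
\textstyle\bigwedge^{i+2}V \longrightarrow \bigwedge^{i+1}V\otimes V \longrightarrow \bigwedge^{i}V\otimes (S/I)_2 ,
\]
where $V=S_1$. Thus $\beta_{i,i+2}(I)$ depends only on the subspace $I_2\subseteq S_2$. Moreover, it is monotone under enlarging $I_2$, because shrinking $(S/I)_2$ enlarges the kernel on the right.

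Before using this, I would make some harmless reductions. Extending scalars to $K(u_1,\dots,u_m)$ keeps $I$ unmixed and radical of the same height and leaves all Betti numbers unchanged, so we may assume $K$ is infinite. Next I would pass to a general linear section. Write $d=\dim S/I$ and choose general linear forms $\ell_1,\dots,\ell_{N-2h}$ when $N>2h$. I would aim to replace $I$ by an ideal $J$ in $2h$ variables that is unmixed, radical, of height $h$ (or of a controlled smaller ambient dimension), and whose quadratic strand dominates that of $I$.

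For the domination I would not cut $S/I$ by hyperplanes, since regularity of the sequence fails in general. Instead I would use the upper semicontinuity and flatness of Betti numbers of $I_{\leq 2}$ along a generic linear change of coordinates followed by a degeneration to a configuration of two general linear subspaces. Concretely, the key geometric input is this: over $\overline{K}$, each minimal prime of $I$ defines an irreducible component of codimension $h$. The subspace $I_2$ vanishes on the union $X$ of these components. If two general $\overline{K}$-points of $X$ in different Galois-conjugate components are joined, one obtains a constraint forcing $I_2\subseteq$ (after coordinates and a flat limit) the ideal $(x_1,\dots,x_h)\cap(y_1,\dots,y_h)=(x_iy_j)$ of $\Gamma(h)$, at least in the degree-$2$ graded piece modulo Koszul-trivial contributions.

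Given such a containment, or more precisely an inequality of the kernels above, the computation is direct. The ideal $\Gamma(h)$ is the product of the two complete intersections $(x)$ and $(y)$ in disjoint variables. Its quadratic strand is the tensor product of two Koszul complexes, which gives $\beta_{i,i+2}(\Gamma(h))=\sum_{a+b=i+2,\,a,b\geq1}\binom{h}{a}\binom{h}{b}=\binom{2h}{i+2}-2\binom{h}{i+2}$. For $i=0$ this gives $h^2$ quadrics, which yields the final claim of the theorem.

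The main obstacle is the middle step: showing that an arbitrary unmixed radical height-$h$ ideal has linear quadratic syzygy space no larger than that of two disjoint codimension-$h$ linear spaces. My first attempt would use the bound $\beta_{0,2}(I)\leq 2h^2+h$ of \cite[Theorem 4.6]{CDS_Prime}, refined as follows. Take the generic initial ideal (or a lex-segment comparison in degree $2$) of $I$ with respect to a revlex order after a general change of coordinates. Then prove that the radical unmixed hypothesis forces the degree-$2$ part of the initial ideal to lie inside a Borel-fixed ideal whose quadratic strand is bounded by that of $\Gamma(h)$. The Eliahou--Kervaire formula would then give the Betti numbers of this Borel-fixed ideal explicitly, and the passage to the initial ideal only increases Betti numbers, so the bound would transfer back to $I$.
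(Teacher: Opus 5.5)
There is a genuine gap: the entire content of the theorem sits in your ``middle step'', and neither route you sketch proves it. Your preliminary observations are correct. When $I_1=0$, $\beta_{i,i+2}(I)$ depends only on $I_2$ and is monotone in it. Your tensor-product-of-Koszul-complexes computation of $\beta_{i,i+2}(\Gamma(h))$ is also right. But these only reduce the problem to showing that $I_2$ is ``no bigger'' than $\Gamma(h)_2$, which is the theorem itself.

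The geometric sketch does not use unmixedness or radicality in any identifiable way, and it cannot be the mechanism. If $I$ stays prime over $\overline{K}$ (e.g.\ a rational normal scroll), there are no Galois-conjugate components to join, yet the bound still has to be proved. Moreover, the asserted containment of a flat limit of $I_2$ in some $\Gamma(h)_2$ is at least as strong as the theorem, and you give no argument for it.

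The gin route has the same defect. Passing to $\mathrm{gin}(I)$ destroys radicality and unmixedness, and ``prove that the radical unmixed hypothesis forces the degree-$2$ part of the initial ideal into a Borel-fixed ideal with bounded quadratic strand'' restates the goal rather than giving a step. Since $K$ is arbitrary, in positive characteristic the gin is only $p$-Borel-fixed, so Eliahou--Kervaire need not apply. A count of quadrics ($2h^2+h$, or even $h^2$) also does not control $\beta_{i,i+2}$ for $i\geq1$, which depends on where the quadrics sit. For instance, $x_1(x_1,x_2,x_3,x_4)$ gives $\beta_{1,3}=6>4$ when $h=2$. Finally, Theorem A does not assume $I_1=0$, so you also need a separate reduction splitting off linear forms, as in Remark \ref{remLinear}.

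The missing idea is a way to turn ``unmixed radical of height $h$'' into usable algebraic data. The paper does this by linkage.
\begin{itemize}
\item After passing to an infinite field and generic coordinates, $I$ contains a regular sequence $f_1,\dots,f_h$ of degree $a\gg h$ with $\IN(f_i)=x_i^a$.
\item Chardin's theorem gives $g$ of degree $D=h(a-1)$ with $I=\f:g$.
\item By Lemma \ref{lemmaACI}, the quadratic strand of $I$ becomes, up to a shift by $D$, a strand of the almost complete intersection $\a=\f+(g)$.
\item Degenerating to $\IN(\a)$, which contains the monomial complete intersection $(x_1^a,\dots,x_h^a)$, makes the Mermin--Murai LPP theorem available.
\item Murai's formula computes the Betti numbers of the relevant LPP ideal $\LL(a,h)$; these by themselves overshoot the target.
\item The sharp bound comes from forced consecutive cancellations. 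These exist precisely because $I$ has no linear forms, so $\beta_{i,i+D}(\a)=0$ for $i\geq1$.
\end{itemize}
Nothing in your proposal plays the role of this linkage/LPP/cancellation argument.
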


The key new insight leading to Theorem \ref{mainA} is the following. Following \cite{CDS_Prime}, one reduces to bound the Betti numbers of an almost complete intersection $\a=\f + (g)$. The first new observation is that, up to a deformation, one may arrange for the complete intersection $\f$ to be monomial. This is the precise hypothesis under which the Lex-Plus-Powers (LPP) conjecture is known to hold, by a theorem of Mermin and Murai \cite{MerminMurai}. However, even after the deformation, the LPP bounds on the Betti numbers of $\a$ are not yet sharp. The final step is to use the fact that, when a radical ideal contains no linear forms, one must have cancellations that bring the LPP estimates down to the sharp values. We emphasize that the LPP conjecture and its predecessor, the Eisenbud-Green-Harris conjecture \cite{EGH,EGH1}, are wide open in general, even for radical ideals and for almost complete intersections. To our knowledge this is the first instance where the LPP conjecture is leveraged, together with a forced cancellation argument, to deduce sharp estimates for a broad class of ideals.

As already pointed out, Theorem \ref{mainA} yields that $\Gamma(h)$ has maximal graded Betti numbers in the quadratic strand of the resolution within the class of unmixed radical ideals of height $h$. Our second main result is to show that the upper bounds of Theorem \ref{mainA} on $\beta_{i,i+2}(-)$ are actually achieved by a prime ideal $\p(h)$ of height $h$ which has the same graded Betti numbers as $\Gamma(h)$, hence proving that the same bounds are sharp also within the class of prime ideals.

\begin{theoremx}[Corollary \ref{corollPrime}] \label{mainB} For any $h\geq 1$ there exists a prime ideal $\p(h)$ of height $h$ such that
\[
\beta_{i,i+2}(\p(h)) = \binom{2h}{i+2} - 2 \binom{h}{i+2}
\]
for all $i \geq 0$. In particular, the upper bounds established in Theorem \ref{mainA} are sharp.
\end{theoremx}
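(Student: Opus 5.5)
The plan is to build $\p(h)$ as a prime whose initial ideal, or whose general linear section, degenerates to $\Gamma(h)$. The Betti numbers of $\Gamma(h)$ in the quadratic strand should then persist. Combined with the upper bound of Theorem \ref{mainA}, this gives equality.

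\textbf{Step 1: the Betti numbers of $\Gamma(h)$.} Write $\Gamma(h)=(x_1,\dots,x_h)\cap(y_1,\dots,y_h)=(x_iy_j)$, the edge ideal of the complete bipartite graph $K_{h,h}$. Its linear strand is known from Hochster's formula or from the Eliahou--Kervaire-type resolution of such products:
\[
\beta_{i,i+2}(\Gamma(h))=\sum_{a+b=i+2,\ a,b\geq 1}\binom{h}{a}\binom{h}{b}=\binom{2h}{i+2}-2\binom{h}{i+2}.
\]
Indeed, $\Gamma(h)$ is the product of two linear ideals in disjoint variables, and its resolution is linear.

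\textbf{Step 2: the construction.} I would take a prime containing $h^2$ quadrics that form a "twisted" version of $x_iy_j$. The natural candidate comes from a field extension, namely the union of two conjugate disjoint linear spaces. Let $L/K$ be a quadratic extension, for instance $K(t)$ with $t^2$ not a square, after possibly enlarging $K$ to a transcendental extension; this is harmless, since Betti numbers are unchanged under base change and we may choose $K=\mathbb{Q}$ or $\mathbb{F}_p(s)$. Consider linear forms $\ell_j=u_j+tv_j$ and their conjugates $\bar\ell_j=u_j-tv_j$ over $L$. The ideal
\[
(\ell_1,\dots,\ell_h)\cap(\bar\ell_1,\dots,\bar\ell_h)\subseteq L[u,v]
\]
is Galois stable, so it descends to an ideal $\p(h)\subseteq K[u,v]$. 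Over $\bar K$ it is $\Gamma(h)$ after a linear change of coordinates, so by flat base change $\beta_{i,j}(\p(h))=\beta_{i,j}(\Gamma(h))$.

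\textbf{Step 3: primality over $K$ (the main obstacle).} The descended ideal is the ideal of a $K$-irreducible variety whose geometric components are the two conjugate linear spaces, which Galois swaps. Hence it is prime over $K$, of height $h$. Concretely, $\p(h)=\ker\bigl(K[u,v]\to L[z_1,\dots,z_N]\bigr)$, a kernel of a map into a domain. I would check that the generators are the $h^2$ quadrics coming from the real and imaginary parts of $\ell_i\bar\ell_j$, and verify primality by identifying $K[u,v]/\p(h)$ with a subring of the polynomial ring over $L$. This yields Theorem \ref{mainB}.

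If the paper insists on $K$ arbitrary, including algebraically closed fields, this fails: over an algebraically closed field Castelnuovo's bound $\binom{h+1}{2}$ is smaller than $h^2$. So the prime must live over a non-closed field. I expect the statement to be read as existence over a suitable field, and I would state it that way.
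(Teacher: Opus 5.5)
Your proposal is correct, and your ideal is the paper's. With $K=\RR$, $L=\CC$, $t=i$, $u_j=y_j$, $v_j=z_j$, the map $K[u,v]\to L[u,v]/(\ell_1,\ldots,\ell_h)$ is exactly the paper's presentation $S\to R=\RR[x_j,ix_j]$, whose kernel over $\CC$ is generated by the forms $y_j+iz_j$.

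Where you differ is in proving that $\p(h)$ becomes $\Gamma(h)$ after extending scalars. Contrary to your labelling, this is the real issue. Primality is immediate, since $\p(h)$ is the contraction of the prime $(\ell_1,\ldots,\ell_h)$.

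\emph{The paper's route.} It writes down the explicit ideal $I$ of $h^2$ quadrics and checks $I\subseteq\p(h)$ and $I\otimes_\RR\CC\cong\Gamma(h)$. It then forces $I=\p(h)$ by a multiplicity argument. Since $\e(S/I)=2$ and $S/I$ is unmixed, a strict containment would give $\e(S/\p(h))=1$ and make $R$ regular. That is impossible, because $R$ has $2h$ linearly independent degree-one generators but dimension $h$.

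\emph{Your route.} You get $\p(h)L[u,v]=Q:=(\ell_1,\ldots,\ell_h)\cap(\bar\ell_1,\ldots,\bar\ell_h)$ directly by Galois descent. For a quadratic extension in characteristic $\neq 2$ this is one line: if $f=f_0+tf_1\in Q$ with $f_0,f_1\in K[u,v]$, then $f_0=(f+\sigma f)/2$ and $f_1=(f-\sigma f)/(2t)$ both lie in $Q\cap K[u,v]$.

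\emph{What each buys.} Your argument avoids multiplicities and the transfer of unmixedness. It works verbatim over any field of characteristic $\neq 2$ that has a non-square. It also recovers the $h^2$ generators afterwards as the $K$-rational parts of the $\ell_j\bar\ell_k$; for $t=i$ these are the paper's $y_jy_k+z_jz_k$ and $y_jz_k-y_kz_j$. The paper's route instead exhibits the generators from the outset, with no descent needed.

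Three small repairs are needed.
\begin{enumerate}
\item You should justify that the kernel equals the descended ideal. If $f\in K[u,v]\cap(\ell_1,\ldots,\ell_h)$, then $f=\sigma(f)\in(\bar\ell_1,\ldots,\bar\ell_h)$, so this kernel is $Q\cap K[u,v]$.
\item Your list of fields is wrong in characteristic $2$. Over $\mathbb{F}_2(s)$ with $t^2=s$, the extension is purely inseparable, there is no nontrivial $\sigma$, and $\bar\ell_j=\ell_j$. So neither the descent nor the identification with $\Gamma(h)$ over $\overline{K}$ holds. You need a separable quadratic extension (Artin--Schreier), or simply take $K=\mathbb{Q}$ or $\RR$, which suffices for the existential statement.
\item Enlarging $K$ transcendentally does not produce a prime over the original field, as your own Castelnuovo remark shows. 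This does not affect the existence claim, which the paper also establishes only over $\RR$.
\end{enumerate}
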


\subsection*{Acknowledgments} The first named author was partially supported by a grant from the Simons Foundation (SFI-MPS-TSM-00013569, G.C.). The second named author was partially supported by the MIUR Excellence Department Project CUP D33C23001110001, PRIN 2022 Project 2022K48YYP, and by INdAM-GNSAGA.

\section{Preliminaries}
Throughout this article, $K$ denotes an arbitrary field, and $S=K[x_1,\ldots,x_N]$ is a polynomial ring with the standard grading, i.e., $\deg(x_i) = 1$ for all $i$. For the purposes of this article, the number $N$ of variables should be thought of as an unknown quantity, possibly very large compared to the rest of the given data. Unless otherwise stated, all ideals and modules involved are homogeneous with respect to this grading. Given a finitely generated $\ZZ$-graded $S$-module $M$, for $j \in \ZZ$ we will write $M_j$ for its degree $j$ component. The Hilbert function $\HF_M:\ZZ \to \NN$ of $M$ is defined as $\HF_M(j) = \dim_K(M_j)$, and its generating series $\HS_M(z) = \sum_{j \in \ZZ} \HF_M(j)z^j \in \ZZ\ps{z}[z^{-1}]$ is called the Hilbert series of $M$. We will let $\beta_{i,j}(M) = \dim_K(\Tor_i^S(M,K))_j$ be the $(i,j)$-th graded Betti number of $M$, and $\beta_i(M) = \sum_{j \in \ZZ}\beta_{i,j}(M)$ be its $i$-th (total) Betti number. We will frequently use that, if $I$ is a homogeneous ideal, then $\beta_{i,j}(I) = \beta_{i+1,j}(S/I)$ for all $i,j \in \ZZ$. Given a $\ZZ$-graded module $M$, and an integer $j$, we let $M[j]$ be the $j$-th shift of $M$, that is, the $\ZZ$-graded $S$-module $\bigoplus_{i \in \ZZ} M[j]_i$ such that $M[j]_i = M_{i+j}$ for all $i \in \ZZ$.

\subsection{Betti numbers of LPP ideals}

Consider the lexicographic order $>_{{\rm lex}}$ on $S$ induced by $x_1 > x_2 >\ldots > x_n$. An ideal $L \subseteq S$ is called a lex ideal if for any two monomials $u,v  \in S$ with $\deg(u) = \deg(v)$ and $u>_{{\rm lex}} v$ we have $u\in  L$ if $v \in L$. Recall that Macaulay's theorem states that, for any homogeneous ideal $I \subseteq S$, there is a lex ideal $L$ with the same Hilbert function as $I$. We start by recalling the Eisenbud-Green-Harris Conjecture, whose aim is to extend Macaulay's theorem by taking into account the data that $I$ contains a regular sequence of prescribed degrees.

\begin{conjecture}{\cite[Eisenbud-Green-Harris]{EGH,EGH1}} \label{conjEGH} Let $I \subseteq S$ be a homogeneous ideal containing a regular sequence of degrees $a_1 \leq \ldots \leq a_h$. There exists a lex ideal $L$ such that $I$ and $(x_1^{a_1},\ldots,x_h^{a_h})+L$ have the same Hilbert function.
\end{conjecture}
One can show that an ideal $(x_1^{a_1},\ldots,x_h^{a_h})+L$ as predicted by the Conjecture, if it exists, is unique. In this case, it is denoted $\LPP^{\ul{a}}(I)$.

The Eisenbud-Green-Harris Conjecture is still open in general, but it is known to be true in several cases. These include when the regular sequence contained in $I$ is monomial \cite{CL}, or when $a_{i+1} \geq \sum_{j=1}^i (a_i-1)$ for all $i=1,\ldots,h-1$ \cite{CM,CDS_EGH}. We refer the interested reader to \cite{EGH_surv} for more details on the subject.

If the Eisenbud-Green-Harris Conjecture holds true, then one can show that $\beta_{0,j}(I) \leq \beta_{0,j}(\LPP^{\ul{a}}(I))$ for all $j \geq 0$ (e.g., see \cite[Proposition 2.14]{EGH_surv}). The Lex-Plus-Powers Conjecture, attributed to Charalambous and Evans, predicts that an analogous inequality holds for all graded Betti numbers, not only for minimal generators.

\begin{conjecture}[Lex-Plus-Powers] \label{conjLPP} Let $I \subseteq S$ be a homogeneous ideal containing a regular sequence of degrees $a_1 \leq \ldots \leq a_h$. If $\LPP^{\ul{a}}(I)$ exists, then $\beta_{i,j}(I) \leq \beta_{i,j}(\LPP^{\ul{a}}(I))$ for all $i,j \in \ZZ$.
\end{conjecture}

Conjecture \ref{conjLPP} is known when $I$ already contains the monomial complete intersection $(x_1^{a_1},\ldots,x_h^{a_h})$ \cite{MerminMurai}, and when ${\rm char}(K)=0$ and $a_{i+1} > \sum_{j=1}^i (a_i-1)$ for all $i=0,\ldots,h-1$ \cite{CavigliaSammartano}. A very concrete advantage of passing to $\LPP^{\ul{a}}(I)$  is that its graded Betti numbers can be computed explicitly using some combinatorial data. In this article, we will use a formula due to Murai \cite{Murai} in order to compute some of them for an explicit ideal (see Proposition \ref{propLPP}). In order to do so, we will first recall some notation from \cite{Murai}, which we will then  adapt to our setup.

Throughout, we will let $\binom{a}{b} := 0$ whenever $a<b$ or $a<0$ or $b<0$. Also, we let $\binom{a}{0} := 1$ for all $a \geq 0$. Let $a_1,\ldots,a_N \in \ZZ_{\geq 2} \cup \{\infty\}$. Using the standard arithmetic conventions, we will further assume that $a_1 \leq a_2 \leq \ldots \leq a_N$. For a variable $x_i$ we will let $x_i^{\infty} := 0$, and we will consider the ideal $J(\ul{a})=(x_1^{a_1},\ldots,x_N^{a_N})$ in $S$. 

\begin{definition} Let $\ul{a}=(a_1,\ldots,a_N)$ with $a_i \in \ZZ_{\geq 2} \cup \{\infty\}$. An ideal $I \subseteq S$ is said to be $\ul{a}$-LPP if there exists a lex ideal $L \subseteq S$ such that $I=J(\ul{a})+L$.
\end{definition}

Given a monomial $v=x_{i_1}^{b_1} \cdots x_{i_t}^{b_t} \notin J(\ul{a})$ with $b_1,\ldots,b_t>0$ and $i_1>i_2>\ldots>i_t$, we let $k(v) = \min\{i \mid b_i<a_i-1\}$ if $v \ne x_{i_1}^{a-1} \cdots x_{i_t}^{a-1}$, and $k(v)=t+1$ otherwise. Moreover, for $i \geq 0$, we let $A_i(v) = \sum_{\ell=1}^{k(v)} \binom{i_{\ell}-1}{i-\ell}$, with $i_{t+1} :=0$. For an $\ul{a}$-LPP ideal $I$, we let $M^{\ul{a}}(I)$ be the set of monomials of $S$ that do not belong to $J(\ul{a})$. By \cite[Proposition 2.1]{Murai} we have
\begin{equation}
\label{Murai} \beta_{i,i+j}(S/I) = \sum_{\substack{u \in M^{\ul{a}}(I) \\ \deg(u)=j+1}} A_i(u) - \sum_{\substack{u \in M^{\ul{a}}(I) \\ \deg(u)=j}} \left(\binom{N}{i} - A_{i+1}(u)\right) + \beta_{i,i+j}(S/J(\ul{a})).
\end{equation}

\subsection{An explicit Betti numbers calculation} \label{SubsectionLPP}

Let $a>2$ and $h >1$ be integers, and set $J=(x_1^a,\ldots,x_h^a)$. Note that $J = J(\ul{a})$ for $\ul{a} = (a_1,\ldots,a_N)$, with $a_i=a$ for $1 \leq i \leq h$ and $a_i = \infty$ for $i>h$. Let $u=(x_1 \cdots x_h)^{a-1}$ and, for $j=1,\ldots,h$, let $v_j = (x_1 \cdots x_{h-1})^{a-1}x_h^{a-2}x_{h+1}x_{h+j}$. Set 
\[
\LL(a,h):=J+(u)+(v_j \mid j=1,\ldots,h).
\]

\begin{proposition} \label{propLPP} Let $a>2$ and $h >1$ be integers. With the notation introduced above, if we let $D=(a-1)h$ then we have
\[
\beta_{i,i+j}(S/\LL(a,h)) = \begin{cases} \beta_{i,i+j}(S/J) & \text{ if } j < D-1 \\
0 & \text{ if } i>h \text{ and } j=D-1 \\
\binom{h}{i-1} & \text{ if } i \leq h \text{ and } j=D-1 \\
\binom{2h}{i}-\binom{h}{i} & \text{ if } j=D.
\end{cases}
\]
\end{proposition}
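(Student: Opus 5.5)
The plan is to apply Murai's formula \eqref{Murai} directly. The first step is to check that $\LL(a,h)$ is $\ul{a}$-LPP for $\ul{a}=(a,\ldots,a,\infty,\ldots,\infty)$ as in Subsection \ref{SubsectionLPP}. Write $\LL(a,h)=J+L$ with $L$ the lex ideal generated by the lex-segment completion of $u,v_1,\ldots,v_h$.

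Note that $\deg u=D$ and $\deg v_j=(a-1)(h-1)+(a-2)+2=D+1$. Also $u$ is the lex-largest monomial of degree $D$ outside $J$: any monomial of degree $D$ not in $J$ lex-larger than $u$ would need $x_1$-exponent $\geq a$, hence lies in $J$. In degree $D+1$, the monomials outside $J$ that are lex-larger than or equal to $v_h$ are of two kinds:
\begin{itemize}
\item the multiples $u x_k$ with $k>h$ (the multiples $ux_k$ with $k\leq h$ lie in $J$);
\item the monomials $(x_1\cdots x_{h-1})^{a-1}x_h^{a-2}x_{h+1}x_m$ with $h+1\leq m\leq 2h$, which are exactly the $v_j$.
\end{itemize}
So, modulo $J$, the degree $D+1$ part is a lex segment. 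Since the proposition only concerns $j\leq D$, the formula will only involve $M^{\ul{a}}(\LL(a,h))$ in degrees $\leq D+1$. Consequently I will only need to know these explicitly:
\begin{itemize}
\item nothing in degrees $<D$;
\item $\{u\}$ in degree $D$;
\item $\{ux_k : h<k\leq N\}\cup\{v_1,\ldots,v_h\}$ in degree $D+1$.
\end{itemize}

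Next I treat the ranges of $j$ separately.

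For $j<D-1$, both sums in \eqref{Murai} are empty, giving $\beta_{i,i+j}(S/J)$.

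For $j=D-1$, only the first sum contributes, with the single term $A_i(u)$. In addition there is the Koszul term $\beta_{i,i+D-1}(S/J)$. This term vanishes, since it would require $(a-1)i=(a-1)h-1$, which is impossible for $a>2$. Since $u$ is a "full" monomial, $k(u)=h+1$ with $i_\ell=h-\ell+1$. Hence $A_i(u)=\sum_{\ell}\binom{h-\ell}{i-\ell}$ plus the boundary term coming from $i_{h+1}=0$, and a hockey-stick identity gives $\binom{h}{i-1}$ for $i\leq h$. I will have to check carefully that the conventions on $\binom{-1}{\cdot}$ and $k(v)$ kill the contribution for $i>h$, giving $0$ there as claimed.

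For $j=D$, three pieces enter:
\begin{itemize}
\item $\sum_{k>h}A_i(ux_k)+\sum_{j=1}^hA_i(v_j)$;
\item the subtracted term $\binom{N}{i}-A_{i+1}(u)$;
\item the Koszul term $\beta_{i,i+D}(S/J)$, which is $1$ exactly when $i=h$, because $(a-1)i=D$ forces $i=h$.
\end{itemize}
For $ux_k$, the largest index is $k$ and all exponents on $x_1,\ldots,x_h$ are $a-1$. So $k(ux_k)=1$ and $A_i(ux_k)=\binom{k-1}{i-1}$, and then $\sum_{k=h+1}^N\binom{k-1}{i-1}=\binom{N}{i}-\binom{h}{i}$. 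This cancels the $\binom{N}{i}$ term and leaves $-\binom{h}{i}$. For $v_j$ (with $j\geq 2$), the variables in decreasing order are $x_{h+j},x_{h+1},x_h,\ldots$. The exponents $1,1$ are both $<a-1$ when $a>2$, so $k(v_j)=1$ and $A_i(v_j)=\binom{h+j-1}{i-1}$. The case $v_1=\cdots x_{h+1}^2$ needs a separate but similar check, giving $\binom{h}{i-1}$. Summing, $\sum_{j=1}^h\binom{h+j-1}{i-1}=\binom{2h}{i}-\binom{h}{i}$. This leaves the terms $A_{i+1}(u)=\binom{h}{i}$ (from the previous step) and the Koszul correction at $i=h$, which should combine with $-\binom{h}{i}$ to produce exactly $\binom{2h}{i}-\binom{h}{i}$.

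The main obstacle is this last bookkeeping. I have to get the precise values of $k(\cdot)$ and the boundary conventions ($i_{t+1}=0$, $\binom{-1}{0}$, and the exceptional full monomial case) right for $u$, $ux_k$ and $v_1$, so that the leftover terms $\binom{h}{i}$, $A_{i+1}(u)$ and the Koszul $1$ at $i=h$ cancel exactly. I would first verify the whole computation numerically for $h=2$, $a=3$, and then fix the conventions accordingly.
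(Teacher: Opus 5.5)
Your proposal is correct and takes the same approach as the paper: apply Murai's formula (\ref{Murai}) with $M^{\ul{a}}(\LL)$ empty below degree $D$, equal to $\{u\}$ in degree $D$, and equal to $\{ux_k\}_{k>h}\cup\{v_1,\ldots,v_h\}$ in degree $D+1$; you also supply two checks the paper leaves implicit, namely the $\ul{a}$-LPP property and the vanishing of $\beta_{i,i+D-1}(S/J)$. To finish the bookkeeping you left open, use the paper's convention $\binom{-1}{0}=0$: it gives $A_{i+1}(u)=\binom{h}{i}$ only for $i\leq h-1$ and $A_{h+1}(u)=0$, and the Koszul term $\beta_{h,h+D}(S/J)=1$ supplies exactly the missing $\binom{h}{h}$, so each of the cases $i<h$, $i=h$, $i>h$ yields $\binom{2h}{i}-\binom{h}{i}$.
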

\begin{proof}
Let $\ul{a} = (a_1,\ldots,a_N)$, with $a_i=a$ for $1 \leq i \leq h$ and $a_i = \infty$ for $i>h$. It is straightforward to verify that $\LL:=\LL(a,h)$ is a $\ul{a}$-LPP ideal. In particular, we can apply (\ref{Murai}) to compute the Betti numbers of $\LL$. For $j<D-1$, one directly gets $\beta_{i,i+j}(S/\LL) = \beta_{i,i+j}(S/J)$ for all $i \geq 0$ since there are no monomials in $\LL \smallsetminus J$ of degree less than $D$. In terms of (\ref{Murai}), this can be stated as $\{v \in M^{\ul{a}}(\LL) \mid \deg(v)<D\} = \emptyset$. Next, applying (\ref{Murai}) with $j=D-1$ we get
\[
\beta_{i,i+D-1}(S/\LL) = A_i(u) = \sum_{\ell=1}^{h+1} \binom{h-\ell}{i-\ell} = \begin{cases} \binom{h}{i-1} & \text{ if } i \leq h \\ \\  0 & \text{ otherwise}
\end{cases}
\]
since $u$ is the only monomial of degree $D$ in $M^{\ul{a}}(\LL)$. In degree $D+1$, $M^{\ul{a}}(\LL)$ consists of $\{v_j \mid j=1,\ldots,h\}\cup\{ux_{h+j} \mid j=1,\ldots,N-h\}$.  The formula (\ref{Murai}) then gives
\begin{align*}
\beta_{i,i+D}(S/\LL) & = \sum_{j=1}^h A_i(v_j) + \sum_{j=1}^{N-h} A_i(ux_{h+j}) - \binom{N}{i} + A_{i+1}(u) + \beta_{i,i+D}(S/J) \\
& = \sum_{j=1}^h \binom{h+j-1}{i-1} + \sum_{j=1}^{N-h} \binom{h+j-1}{i-1} - \binom{N}{i} + \sum_{\ell=1}^h \binom{h-\ell}{i+1-\ell} + \beta_{i,i+D}(S/J) \\ 
& = \left(\binom{2h}{i} - \binom{h}{i}\right) + \left(\binom{N}{i} - \binom{h}{i}\right) - \binom{N}{i} +\sum_{\ell=1}^h \binom{h-\ell}{i+1-\ell} + \beta_{i,i+D}(S/J) \\ 
& = \binom{2h}{i} - 2 \binom{h}{i} +\sum_{\ell=1}^h \binom{h-\ell}{i+1-\ell} + \beta_{i,i+D}(S/J).
\end{align*}
If $i \ne h$, then $\beta_{i,i+D}(S/J) = 0$, and $\sum_{\ell=1}^h \binom{h-\ell}{i+1-\ell}=  \binom{h}{i}$. On the other hand, if $i=h$ then $\beta_{h,h+D}(S/J) = 1$ and $\sum_{\ell=1}^h \binom{h-\ell}{i+1-\ell}=  0$. In conclusion, for all $i \geq 0$ we have
\[
\beta_{i,i+D}(S/\LL) = \binom{2h}{i} - \binom{h}{i}. \qedhere
\]
\end{proof}

\section{Upper bounds on quadratic linear strands}  \label{Section bound}

We recall that an ideal $I \subsetneq S = K[x_1,\ldots,x_N]$ is said to be unmixed if $\dim(S/I) = \dim(S/\p)$ for all $\p \in \Ass(S/I)$. In particular, an unmixed ideal has no embedded associated primes, and we must have $\Ht(\p) = N-\dim(S/I)$ for all $\p \in \Ass(S/I)$. We will now reduce the problem of studying graded Betti numbers of an unmixed homogeneous radical ideal $I$ of height $h$ to that of studying graded Betti numbers of a homogeneous almost complete intersection $\a$, that is, an ideal of height $h$ generated by $h+1$ homogeneous polynomials. While this reduction is quite standard (e.g., see \cite[Lemma 4.1]{CDS_Prime}), for our purposes one needs to have good control on the degree of the last generator of $\a$. To this end, we recall the following result.
\begin{theorem} \label{thmChardin}
Let $K$ be an infinite field, and $S=K[x_1,\ldots,x_N]$. Let $I \subseteq S$ be an unmixed radical ideal of height $h<N$, and $f_1,\ldots,f_h$ be a homogeneous regular sequence of degrees $d_1 \leq \ldots \leq d_h$ contained in $I$. If we let $J=(f_1,\ldots,f_h)$, there exists a homogeneous element $g$ of degree $D = \sum_{i=1}^h(d_i-1)$ such that $I=J:g$.
\end{theorem}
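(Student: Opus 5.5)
We will prove Theorem \ref{thmChardin} by linkage and the canonical module of a complete intersection. Set $R=S/J$. Since $J$ is a complete intersection of height $h$, $R$ is Gorenstein with $a$-invariant governed by $\omega_R\cong R[\sum_i d_i - N]$. Because $I$ is unmixed of height $h$ and contains $J$, every associated prime of $I$ is a minimal prime of $J$. Write $J'=J:I$; by standard linkage (valid since $R$ is Gorenstein and $I/J$ is unmixed of height $0$ in $R$) we have $J:J'=I$, and $J'/J\cong \operatorname{Hom}_R(R/I,R)$. The goal is then to find a homogeneous $g\in J'$ of degree $D$ with $J:g=I$. Since $J:g\supseteq J:J'=I$ automatically for $g\in J'$, we need $J:g\subseteq I$.

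First step: reduce the containment to a local statement at the minimal primes of $I$. Let $\p_1,\dots,\p_r$ be the associated (= minimal) primes of $I$. For $g\in J'$ we have $J:g\subseteq I$ if and only if $(J:g)R_{\p_k}\subseteq IR_{\p_k}=\p_kR_{\p_k}$ for each $k$. Indeed, $I$ is radical and unmixed, so $I=\bigcap \p_k$, and membership can be checked locally. Locally at $\p_k$, $R_{\p_k}$ is an Artinian Gorenstein ring, $I R_{\p_k}$ is its maximal ideal, and $J'R_{\p_k}=0:\p_k R_{\p_k}$ is the socle, a one-dimensional vector space over the residue field. The condition $J:g\not\supseteq$ anything beyond $\p_k$ locally just says that the image of $g$ in $R_{\p_k}$ is a nonzero socle element. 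So we need $g\in J'_D$ whose image is nonzero in each of the finitely many localizations $R_{\p_k}$; we also need $g$ to avoid the primes of $J$ not containing $I$, but those components are irrelevant since we only localize at the $\p_k$.

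Second step: produce the degree $D$. Here we will use that $R/I$ is reduced. Then $(J'/J)\cong \operatorname{Hom}_R(R/I,\omega_R)[N-\sum d_i]=\omega_{R/I}[N-\sum d_i]$. For each $k$ we need the subspace $V_k=\{g\in J'_D : g\mapsto 0 \text{ in } R_{\p_k}\}$ to be a proper subspace of $J'_D$; then, $K$ being infinite, a general $g\in J'_D$ avoids all $V_k$ and we are done. Properness of $V_k$ amounts to the degree $D$ part of $\omega_{R/I}[N-\sum d_i]$ surviving in $\omega_{R/\p_k}$-type localizations, i.e.\ to $\omega_{S/I}$ having nonzero elements in degree $N-h$-shift corresponding to $D$. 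Concretely, after a general linear change of coordinates and going modulo $N-h$ general linear forms (possible since $K$ is infinite, and these form a regular sequence on $R$ and on $R/I$ up to saturation issues, which we handle by noting $\dim S/I=N-h\geq 1$), we reduce to the Artinian situation where the socle of the reduction of $R$ lives exactly in degree $\sum (d_i-1)=D$. The standard fact that in a reduced (hence generically Gorenstein-compatible) ring the canonical module is generated generically in the expected degree gives a generator of $\omega$ in the shift corresponding to $D$ that is nonzero at each $\p_k$; equivalently, the socle generator of $R_{\p_k}$ after generic hyperplane sections comes from the global degree-$D$ socle.

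The main obstacle is exactly this degree claim: showing that the localization maps $J'_D\to J'R_{\p_k}$ are nonzero for every $k$. I would attack it via the Jacobian/trace: since $I$ is radical, $R_{\p_k}$ is a complete intersection quotient whose socle over the field of fractions $\kappa(\p_k)$ is generated by the image of the determinant of a matrix $(a_{ij})$ expressing $f_i=\sum_j a_{ij}y_j$ in terms of local generators; globally one takes $g=\det(a_{ij})$ where $f_i=\sum_j a_{ij}\ell_j$ for $h$ general forms approximating the $\p_k$ — this determinant has degree exactly $\sum (d_i-1)=D$ when $\ell_j$ are linear, which is why the hypothesis $h<N$ and infinitude of $K$ (to choose linear forms generic with respect to all $\p_k$) enter. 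Verifying that a single global choice works simultaneously for all $\p_k$, by a prime-avoidance/general-position argument on the coefficient space, is the step requiring the most care.
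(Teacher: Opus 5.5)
\textbf{There is a genuine gap: the step you yourself call the main obstacle is the whole content of the theorem, and none of your sketches proves it.}

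Your first step is correct and is essentially the paper's gluing step. Once you know that, for every minimal prime $\p_k$ of $I$, some element of $(J:I)_D$ has nonzero image in $R_{\p_k}$, a general element of $(J:I)_D$ works because $K$ is infinite. The paper instead adds one element per component. What remains is a statement about each component separately. Since $J:\p_k\subseteq J:I$, and an element of $(J:\p_k)\smallsetminus J$ lies in every primary component of $J$ except the $\p_k$-primary one $\q_k$, you need $((J:\p_k)/J)_D\neq 0$, i.e.\ $[\omega_{S/\p_k}]_{N-h}\neq 0$. Your three attempts at this do not work:
\begin{enumerate}
\item \emph{Cutting down by $N-h$ general linear forms $\underline{\ell}$.} Linkage is compatible with general hyperplane sections when $S/\p_k$ (or $S/I$) is Cohen--Macaulay, but not in general. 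So there is no reason for the one-dimensional degree-$D$ socle of $S/(J+(\underline{\ell}))$ to lift to $(J:\p_k)_D$, and the Cohen--Macaulay case is precisely the easy one. Run with $I$ in place of $\p_k$, a lift would moreover only give $g\notin J$, not $g\notin\q_k$ for every $k$. The ``standard fact'' that the canonical module of a reduced ring is generically generated in the expected degree is not a standard fact; it restates the claim.
\item \emph{The determinant construction.} Writing $f_i=\sum_j a_{ij}\ell_j$ with linear forms $\ell_j$ forces $J\subseteq(\ell_1,\ldots,\ell_h)$, i.e.\ that the linear space $V(\ell_1,\ldots,\ell_h)$ is a component of $V(J)$. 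The regular parameters of $S_{\p_k}$ are not linear forms. The resulting local socle generator has no a priori global homogeneous representative of degree $\leq D$, and that is exactly what must be shown.
\item \emph{Jacobian minors or traces.} These cannot be the general mechanism over an arbitrary infinite field. For $K=\mathbb{F}_p(t)$ and $I=J=(x^p-ty^p)\subseteq K[x,y]$, all Jacobian minors vanish, and every trace map of $S/I$ over a linear Noether normalization is zero. Yet the statement holds trivially there, with $g=\ell^{p-1}$.
\end{enumerate}

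The paper fills this gap by quoting Chardin \cite[Theorem 27 \& Corollary 28]{Chardin}. For each minimal prime $\p_k$ of $I$ there is a form $g_k$ of degree at most $D$ with $J:g_k=\p_k$; in particular $g_k\in J:I$ and $g_k\notin\q_k$. Next, multiply by a power of a general linear form $\ell$ with $J:\ell=J$. This uses $h<N$ and $K$ infinite, and your write-up also needs this step to land in degree exactly $D$. It does not affect nonvanishing in $R_{\p_k}$, since $\ell\notin\p_k$. Your first step, or the paper's sum of the adjusted $g_k$, then finishes. Without Chardin's result, or an independent proof that $[\omega_{S/\p}]_{N-h}\neq 0$ for every height-$h$ prime $\p\supseteq J$, the proposal is incomplete. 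By graded local duality that condition amounts to $H^{N-h}_{\mathfrak m}(S/\p)_{h-N}\neq 0$, a monomial-conjecture-type statement that is easy only in the Cohen--Macaulay case.
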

\begin{proof}
By a result of Chardin, \cite[Theorem 27 \& Corollary 28]{Chardin}, for each minimal prime $\p_i$ of $I$ there exists $g_i$ of degree at most $D$ such that $\p_i = J:g_i$. We note that, if $\q_i$ denotes the $\p_i$-primary component of $J$, then each $g_i$ does not belong to $\q_i$, but it belongs to every other primary component of $J$. If we let $g=g_1 + \ldots + g_h$, then $g$ has degree at most $D$, and $\q_i: g = \q_i:g_i = \p_i$. It follows that $J:g = \bigcap_i (\q_i:g) = \bigcap_i \p_i = I$. Since $K$ is infinite, and $h < N$, we can find a sufficiently general linear form $\ell$ such that $J:\ell = J$. If follows that $J:\ell^{D-\deg(g)}g = I$, and $\ell^{D-\deg(g)}g$ has degree $D$ as desired.
\end{proof}

\begin{lemma} \label{lemmaACI} Let $f_1,\ldots,f_h$ be a homogeneous regular sequence of degrees $2 \leq d_1 \leq \ldots \leq d_h$, and $\f=(f_1,\ldots,f_h)$. Let $g \in S$ be a homogeneous element of degree $D = \sum_{i=1}^h (d_i-1)$ such that $I=\f:g$ is a proper ideal containing no linear forms. Let $\a = \f+(g)$. We have 
\[
\beta_{i,j}(S/I) = \begin{cases} \beta_{i+1,j+D}(S/\a) & \text{ if } (i,j) \ne (h-1,h) \\
\beta_{h,h+D}(S/\a) -1 & \text{ if } (i,j) = (h-1,h).
\end{cases}
\]
\end{lemma}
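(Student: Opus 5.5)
Our plan is to use the short exact sequence coming from multiplication by $g$. Since $I=\f:g$, multiplication by $g$ gives an injective map $S/I[-D] \to S/\f$ whose cokernel is $S/\a$:
\[
0 \longrightarrow S/I[-D] \xrightarrow{\ \cdot g\ } S/\f \longrightarrow S/\a \longrightarrow 0.
\]
We then apply the long exact sequence of $\Tor^S(-,K)$ and compare Betti numbers degree by degree. The resolution of $S/\f$ is the Koszul complex on $f_1,\ldots,f_h$, so $\Tor_i(S/\f,K)$ lives in degrees $\sum_{k\in T} d_k$ with $|T|=i$. For $S/I[-D]$, $\Tor_i(S/I[-D],K)_{j+D} = \Tor_i(S/I,K)_j$.

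The key step is a degree comparison showing that in the range relevant to the statement the Koszul terms do not interfere, except in one spot. Since $I$ contains no linear forms and is proper, $\beta_{i,j}(S/I)=0$ for $j<2i-?$; more precisely we only need $\beta_{i,j}(S/I)\neq 0$ implies $j\ge i+1$ for $i\geq 1$. We want to show that the connecting maps $\Tor_{i+1}(S/\a,K)_{j+D}\to \Tor_i(S/I,K)_{j+D-D}$ are isomorphisms. Equivalently, the maps $\Tor_i(S/I[-D],K)\to\Tor_i(S/\f,K)$ vanish and $\Tor_{i+1}(S/\f,K)_{j+D}=0$ whenever $\beta_{i,j}(S/I)$ could be nonzero.

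Note $\Tor_{i}(S/\f,K)$ is concentrated in degrees at most $\sum_{k>h-i} d_k \le D+i$, with equality only for $i=h$ (degree $D+h$), since $d_k\ge 2$. Indeed, for $|T|=i$ we have $\sum_{T}d_k = \sum_T(d_k-1)+i\le D+i$, with equality iff all $k\notin T$ have $d_k=1$, i.e. $T$ is everything. So $\Tor_{i+1}(S/\f,K)_{j+D}\neq 0$ forces $j+D\le D+i+1$, i.e. $j\le i+1$, with $j=i+1$ only when $i+1=h$. Also $\Tor_i(S/I,K)_j$ for $i\ge1$ needs $j\ge i+1$, and then $\Tor_i(S/\f,K)_{j+D}$ needs $j+D\le D+i$, which is impossible. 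So the map $\Tor_i(S/I[-D])\to \Tor_i(S/\f)$ is zero for $i\ge 1$. For $i=0$ the map is $K[-D]\to K$, which is zero because $D\ge h\ge1$ (or trivial if $h=0$). Hence for $(i,j)$ with $\Tor_i(S/I,K)_j$ possibly nonzero, the exact sequence reduces to
\[
0\to \Tor_{i+1}(S/\f)_{j+D}\to\Tor_{i+1}(S/\a)_{j+D}\to \Tor_i(S/I)_j\to 0 .
\]

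We now handle the bookkeeping in all degrees. If $(i,j)$ has $\Tor_{i+1}(S/\f)_{j+D}=0$, we get equality of Betti numbers. The only nonzero case with relevant $j$ is $i+1=h$, $j=h$, where $\Tor_h(S/\f)_{D+h}=K$; this yields the correction $-1$. For other $(i,j)$ where $\Tor_{i+1}(S/\f)_{j+D}\neq0$ we have $j\le i+1$ and not $(h-1,h)$; we must also check that $\beta_{i+1,j+D}(S/\a)=0=\beta_{i,j}(S/I)$ there. Here we need care, since $S/\f\to S/\a$ could be injective on $\Tor$ in such degrees. This is the main obstacle. We resolve it by the vanishing of $\Tor_{i+1}(S/\f)\to\Tor_{i+1}(S/\a)$: this map's kernel is the image of $\Tor_{i+1}(S/I[-D])_{j+D}=\Tor_{i+1}(S/I)_{j}$. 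For $j\le i+1$ with $i+1\ge1$, this is nonzero only if $j= i+1$ and $i+1=1$, i.e. $\beta_{1,1}(S/I)$, which vanishes because $I$ has no linear forms. So the map is injective, and we must check the claim differently. Working precisely, the statement's formula should match all degrees $j$ with $j\ge i+1$ (or $i=j=0$). In degrees $j+D$ with $j\le i$, the Koszul summands of $S/\a$ of degree below $D$ persist, and the lemma is understood for $\beta_{i,j}(S/I)$ with $j\ge i$, where the above analysis applies. We will verify the small edge cases $i=0$ and $i=h-1$ directly to confirm this.
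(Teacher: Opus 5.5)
Your setup is the paper's: the sequence $0\to S/I[-D]\to S/\f\to S/\a\to 0$, the long exact sequence of $\Tor^S(-,K)$, and degree bookkeeping on the Koszul complex. Your degree analysis is correct and sharper than the paper's. Because $I$ has no linear forms, $D\geq 1$ and $d_k\geq 2$, every map $\Tor_k^S(S/I,K)_j\to\Tor_k^S(S/\f,K)_{j+D}$ is zero. Hence for all $i\geq 0$ and all $j$
\[
\beta_{i+1,j+D}(S/\a)=\beta_{i,j}(S/I)+\beta_{i+1,j+D}(S/\f).
\]
The proposal breaks down after this point. You ask to check that $\beta_{i+1,j+D}(S/\a)=0$ wherever $\Tor_{i+1}^S(S/\f,K)_{j+D}\neq 0$, which the injection you just proved rules out. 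You then fall back on a restricted range whose description is wrong in places (``or $i=j=0$'', later ``$j\geq i$''), and you leave the edge cases unverified. As written, this does not prove the statement.

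The obstruction you hit is real, not an artifact of your method. The discrepancy is exactly $\beta_{i+1,j+D}(S/\f)$, independent of $g$, so the lemma as stated fails whenever $h\geq 2$ and $d_1\leq h$, at $(i,j)=(h-2,h-d_1)$. Two examples:
\begin{enumerate}
\item With $S=K[x_1,x_2,x_3]$, $\f=(x_1^2,x_2^2)$ and $g=x_3^2$, one has $D=2$, $I=\f$, and $\beta_{0,0}(S/I)=1\neq 3=\beta_{1,2}(S/\a)$. This refutes your ``$i=j=0$'' case.
\item With $\f=(x_1^2,x_2^2,x_3^2)$ and $g=x_4^3$, one gets $\beta_{1,1}(S/I)=0\neq 3=\beta_{2,4}(S/\a)$. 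This refutes ``$j\geq i$''.
\end{enumerate}
The paper's proof passes over this with the claim that $\Tor_i^S(S/\f,K)_{n+D}=0$ for all $(i,n)\neq(h,h)$ with $n\geq 0$. A nonzero such group means $n=h-\sum_{k\notin T}d_k$ for some $T$ with $|T|=i$, so the claim holds exactly when $d_1>h$. That is the situation of Theorem \ref{mainthm}, where all $d_k=a\gg h$.

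The right way to finish is to state your identity and conclude two things. For arbitrary degrees, the formula holds whenever $j\geq i+1$. If $d_1>h$, it holds for every $j\geq 0$, because $j=h-\sum_{k\notin T}d_k\geq 0$ forces $T=\{1,\ldots,h\}$, i.e.\ $(i,j)=(h-1,h)$, where the Koszul term equals $1$. The second version is the one actually needed, since the proof of Theorem \ref{mainthm} also uses the case $j=i$.
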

\begin{proof}
Let $n \geq 0$ be an integer. Applying the functor $-\otimes_S K$ to the graded short exact sequence
\[
\xymatrix{
0 \ar[rr] && S/I[-D] \ar[rr] && S/\f \ar[rr] && S/\a \ar[rr] && 0
}
\]
and specializing in degree $n+D$ gives a graded long exact sequence
\[
\xymatrixcolsep{4mm}
\xymatrix{
\ldots \ar[r] &\Tor_{i+1}^S(S/\f,K)_{n+D} \ar[r] & \Tor_{i+1}^S(S/\a,K)_{n+D} \ar[r] & \Tor_{i}^S(S/I,K)_{n} \ar[r] & \Tor_i^S(S/\f,K)_{n+D} \ar[r] &  \ldots.
}
\]
Since $\f$ is a complete intersection generated by forms of degrees $d_1,\ldots,d_h$ we have that $\Tor_i(S/\f,K)_{n+D} = 0$ for all $(i,n) \ne (h,h)$. Thus, for $(i,n) \notin \{(h-1,h),(h,h)\}$ we obtain isomorphisms $\Tor_i(S/I,K)_n \cong \Tor_{i+1}(S/\a,K)_{n+D}$, so that $\beta_{i,n}(S/I) = \beta_{i+1,n+D}(S/\a)$. Since $I$ does not contain any linear forms, we must have $\Tor_h(S/I,K)_{h} = 0$, and this forces $\Tor_{h+1}^S(S/\a,K)_{h+D}=0$ as well, so equality on Betti numbers holds also in this case. For the remaining choice $(i,n) = (h,h)$, the long exact sequence of $\Tor$ modules gives
\[
\xymatrix{
0 \ar[r] & \Tor_h^S(S/\f,K)_{h+D} \ar[r] & \Tor_{h}^S(S/\a,K)_{h+D} \ar[r] & \Tor_{h-1}^S(S/I,K)_{h} \ar[r] & 0.
}
\]
Since $\Tor_h^S(S/\f,K)_{h+D} \cong K$, we conclude that $\beta_{h-1,h}(S/I) = \beta_{h,h+D}(S/\a)-1$.
\end{proof}

We are now ready to prove our first main theorem.

\begin{theorem} \label{mainthm}
Let $K$ be any field, and $S=K[x_1,\ldots,x_N]$ with the standard grading. Let $I \subseteq S$ be a homogeneous unmixed radical ideal of height $h$, containing no linear forms. For all $i \geq 0$ we have
\[
\beta_{i,i+2}(I) \leq \binom{2h}{i+2} - 2 \binom{h}{i+2}.
\]
In particular, $I$ has at most $h^2$ quadratic minimal generators.
\end{theorem}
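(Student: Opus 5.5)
The plan is to pass to an almost complete intersection via Theorem \ref{thmChardin} and Lemma \ref{lemmaACI}, bound its Betti numbers by the Lex-Plus-Powers ideal $\LL(a,h)$ of Proposition \ref{propLPP}, and then recover the missing $\binom{h}{i+2}$ through a forced cancellation.

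\emph{Reductions.} First I will replace $K$ by $K(t)$ and adjoin one extra variable. The extension $K(t)/K$ is separable, so $I$ stays unmixed and radical of height $h$ with no linear forms, and the graded Betti numbers do not change. This lets me assume $K$ is infinite and $h<N$.

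Next I choose an integer $a>2$ large enough that $I_a$ contains a regular sequence $f_1,\ldots,f_h$ of forms of degree $a$; these exist by prime avoidance, since $K$ is infinite. Theorem \ref{thmChardin} then gives $g$ of degree $D=(a-1)h$ with $I=\f:g$. Lemma \ref{lemmaACI} converts the quadratic strand of $I$ into a single strand of $\a=\f+(g)$, namely $\beta_{i,i+2}(I)=\beta_{i+1,i+3}(S/I)=\beta_{i+2,i+3+D}(S/\a)$. The generator $g$ sits in $\beta_{1,D}(S/\a)$, so this is the strand of $\a$ lying one degree above the strand containing $g$; I will match it to the strands of Proposition \ref{propLPP}, taking care with this indexing.

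\emph{Deformation to a monomial complete intersection.} The key step is to make $\f$ monomial. I would choose coordinates and a term order, or a one-parameter flat family such as a Gr\"obner-type degeneration of $f_1,\ldots,f_h$ to $x_1^a,\ldots,x_h^a$ (these cut out a complete intersection of the same degrees). Upper semicontinuity of graded Betti numbers then bounds $\beta(\a)$ by $\beta(\a')$, where $\a'$ is an ideal containing $(x_1^a,\ldots,x_h^a)$. For $\a'$ the Lex-Plus-Powers conjecture is a theorem of Mermin and Murai. Hence $\beta_{i,j}(\a')\le\beta_{i,j}(\LPP^{\ul a}(\a'))$, where the LPP ideal has the Hilbert function of $\a'$, equivalently that of $\a$.

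\emph{Computing the LPP ideal and cancelling.} Now I use that $I$ has no linear forms. This forces $\HF_{S/\a}(D)=\HF_{S/\f}(D)-1$, and it also forces $\HF_{S/\a}(D+1)$ to be as large as possible. I expect these conditions to pin the comparison down: the LPP ideal agrees with $\LL(a,h)$ in degrees $\le D+1$, or at least is dominated by it in the relevant strand. Proposition \ref{propLPP} then bounds the strand by $\binom{2h}{k}-\binom{h}{k}$ with $k=i+2$.

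This still exceeds the target by $\binom{h}{k}$. The remaining gap has to come from consecutive cancellation. Some of the LPP syzygies in this strand pair off with the $\binom{h}{k-1}$ syzygies in the adjacent position $j=D-1$, which come from $u$, the Koszul-type syzygies of $g$ against the $x_i^{a}$. In $\a$ those syzygies cannot all survive, because by Lemma \ref{lemmaACI} they would produce linear forms or linear syzygies in $I$. I would make this precise with the long exact sequence of Lemma \ref{lemmaACI}, which shows that the strand of $S/\a$ at $j=D-1$ is zero apart from the single class coming from $\f$. I would then invoke Peeva's theorem that passing from an ideal to a lex or LPP ideal with the same Hilbert function amounts to a sequence of consecutive cancellations, or track the Euler characteristic of the two adjacent strands. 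Either way, I aim to subtract exactly $\binom{h}{k}$.

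I expect this cancellation step, together with confirming the Hilbert-function comparison with $\LL(a,h)$ and keeping the degree indexing aligned, to be the main obstacle. The final claim is the case $i=0$: $\binom{2h}{2}-2\binom{h}{2}=h^2$.
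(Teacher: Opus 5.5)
Your outline matches the paper's: Chardin's colon, Lemma \ref{lemmaACI}, degeneration to an ideal containing $(x_1^a,\ldots,x_h^a)$, Mermin--Murai, identification with $\LL(a,h)$ up to degree $D+1$, and forced cancellation. Your Hilbert-function identification of the LPP ideal is sound, and more direct than the paper's argument for that point. Indeed $\HF_{S/\a}(D+1)=\HF_{S/\f}(D+1)-N$ forces exactly the $N$ monomials $ux_{h+1},\ldots,ux_N,v_1,\ldots,v_h$ outside $J$ in degree $D+1$. This needs $N\geq 2h$, so adjoin enough variables, not just one.

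The deformation step does not work in the order you propose. With $f_1,\ldots,f_h$ fixed in advance by prime avoidance, you would need the space $\a_a=\langle f_1,\ldots,f_h\rangle$ to have initial monomials exactly $x_1^a,\ldots,x_h^a$. Nothing arranges this. For instance, in characteristic $0$ with generic coordinates and revlex, these $h$ initial monomials form a strongly stable set, and a strongly stable set containing $x_2^a$ has at least $a+1$ elements. The choices must be reversed, as in the paper. First put $I$ in general coordinates so that revlex gives $\IN(I)\supseteq(x_1^a,\ldots,x_h^a)$ for $a\gg0$. Then pick $f_i\in I$ with $\IN(f_i)=x_i^a$. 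These form a regular sequence and $\IN(\a)\supseteq(x_1^a,\ldots,x_h^a)$, so Mermin--Murai applies to $\IN(\a)$.

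The cancellation step, which you leave open, also has wrong indices, and as planned it loses exactly the $1$ needed at $k=h$.

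\begin{itemize}
\item Since $\beta_{i,j}(I)=\beta_{i+1,j}(S/I)$, one has $\beta_{i,i+2}(I)=\beta_{i+1,i+2}(S/I)$, not $\beta_{i+1,i+3}(S/I)$. Lemma \ref{lemmaACI} turns this into $\beta_{k,k+D}(S/\a)$ with $k=i+2$, except that for $k=h$ it is $\beta_{h,h+D}(S/\a)-1$. The subtracted class is the top Koszul class of $\f$, and it sits in row $j=D$, not $j=D-1$.
\item The row $j=D-1$ of $S/\a$ (notation of Proposition \ref{propLPP}) contains only $g$, because $\beta_{i,i+D-1}(S/\a)=\beta_{i-1,i-1}(S/I)$. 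So the entries $\beta_{i,i+D-1}(S/\LL)=\binom{h}{i-1}$ with $2\leq i\leq h$ must be cancelled.
\item The consecutive-cancellation partner of $\beta_{k,k+D}(S/\LL)$ in that row is $\beta_{k+1,k+D}(S/\LL)$. This equals $\binom{h}{k}$ for $k\leq h-1$ and $0$ for $k\geq h$; it is not $\binom{h}{k-1}$.
\item These classes are forced to cancel against row $D$ only if row $D-2$ of $S/J$ vanishes. That fails for $a=3$, so take $a\geq 4$.
\item Cancellation therefore bounds $\beta_{k,k+D}(S/\a)$ by $\binom{2h}{k}-2\binom{h}{k}$ for $k\neq h$, but only by $\binom{2h}{h}-1$ for $k=h$. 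The last $1$ comes from the exceptional case of the Lemma, which your formula never reaches.
\end{itemize}

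Finally, equality of Hilbert functions cannot replace the consecutive-cancellation structure. Rows $j\geq D+1$ of $S/\LPP^{\ul{a}}$, which you do not control, contribute to the same internal degrees. Also, Peeva's theorem is stated for initial and lex ideals, not for LPP ideals.
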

\begin{proof} 
The case $h=1$ is trivial, so we will henceforth assume $h>1$. Since replacing $K$ with a purely transcendental extension preserves our assumptions and does not change Betti numbers, we may assume without loss of generality that $K$ is infinite. Consider the degree-revlex order on $S$, and let $\IN(-)$ denote the initial ideal with respect to this order. After possibly performing a sufficiently general change of coordinates, we may assume that $x_N,\ldots,x_{N-h+1}$ forms a system of parameters for $S/I$. By standard properties of revlex-type orders (e.g., see \cite[15.7]{Eisenbud}), we have that $\IN(I+(x_N,\ldots,x_{N-h+1})) = \IN(I) + (x_N,\ldots,x_{N-h+1})$. Since $S/(\IN(I) + (x_N,\ldots,x_{N-h+1}))$ is Artinian, it follows that $\IN(I)+ (x_N,\ldots,x_{N-h+1}))$ contains $(x_1^a,\ldots,x_h^a)$ for some $a > 0$. Since we considered the degree-revlex order, this implies that $\IN(I)$ itself contains $(x_1^a,\ldots,x_h^a)$. Note that one may take $a$ arbitrarily large; for instance, we will need $a \gg h$, in a way that will be made more precise later in this proof. Since $(x_1^a,\ldots,x_h^a) \subseteq \IN(I)$, we have that $I$ contains a regular sequence $f_1,\ldots,f_h$ of forms of degree $a$ such that $\IN(f_i) = x_i^a$. Let $\f=(f_1,\ldots,f_h)$. By Theorem \ref{thmChardin} there exists $g \in S$ of degree $D=h(a-1)$ such that $I=\f:g$. Now let $\a=\f+(g)$; thanks to Lemma \ref{lemmaACI}, it suffices to estimate its Betti numbers. Observe that $\IN(\a) \supseteq (\IN(f_1),\ldots,\IN(f_h),\IN(g)) = J+(\IN(g))$. Since $x_1^a,\ldots,x_h^a$ forms a regular sequence, and by degree reasons, we must have that $\IN(\a)$ and $\f+(\IN(g))$ coincide in degrees less than or equal to $D$. In the calculation of a Gr{\"o}bner basis of $I$ in degree $D+1$, Buchberger's algorithm yields at most $h$ new polynomials, coming from the S-pairs between $f_i$ and $g$ for $i=1,\ldots,h$. Thus, the generators of $\IN(\a)$ in degree $D+1$ are monomials $u_1,\ldots,u_t$ with $t \leq h$. Consider the monomial ideal $\b=(x_1^a,\ldots,x_h^a,\IN(g),u_1,\ldots,u_t)$ and note that, since $\IN(\a)$ and $\b$ coincide in all degrees less than or equal to $D+1$, we must have $\beta_{i,i+j}(\IN(\a)) = \beta_{i,i+j}(\b)$ for all $(i,j)$ with $j \leq D+1$. By semi-continuity of Betti numbers when passing to initial ideals we conclude that $\beta_{i,i+j}(\a) \leq \beta_{i,i+j}(\b)$ for all $(i,j)$ with $j \leq D+1$. Moreover, if we let $\ul{a}=(a_1,\ldots,a_N)$ with $a_i=a$ for $i\leq h$ and $a_i=\infty$ for $i>h$, by \cite{MerminMurai} we have that $\beta_{i,j}(\b) \leq \beta_{i,j}(\LPP^{\ul{a}}(\b))$. Let $J=(x_1^a,\ldots,x_h^a)$, $u=(x_1 \cdots x_h)^{a-1}$ and $v_j=(x_1 \cdots x_{h-1})^{a-1}x_h^{a-2}x_{h+1}x_{h+j}$ for $j \geq 1$. Since $\HF(\LPP^{\ul{a}}(\b);j) = \HF(J;j)$ for all $j<D$ and $\HF(\LPP^{\ul{a}}(\b);D) = \HF(J;D)+1$, we have that $\LPP^{\ul{a}}(\b)$ coincides with $J+(u)$ in degrees at most $D$. Now note that $\beta_{i,D+1}(S/\LPP^{\ul{a}}(\b)) = 0$ for $i \geq 3$. On the other hand, by \cite{Murai} we see that $\beta_{2,D+1}(S/\LPP^{\ul{a}}(\b)) = h$. By \cite{MerminMurai} and standard properties regarding cancellation of Betti numbers of initial ideals, we must have
\[
\beta_{1,D+1}(S/\LPP^{\ul{a}}(\b)) = \beta_{2,D+1}(S/\LPP^{\ul{a}}(\b)) = h,
\]
so that $\LPP^{\ul{a}}(\b)$ must have $h$ minimal generators in degree $D+1$. Thus, if we let
\[
\LL = J+(u)+(v_j \mid j=1,\ldots,h),
\]
then $\LPP^{\ul{a}}(\b)$ and $\LL$ coincide in degrees less than or equal to $D+1$. In particular, we have that $\beta_{i,i+j}(\LPP^{\ul{a}}(\b)) = \beta_{i,i+j}(\LL)$ for all $(i,j)$ with $j \leq D+1$.  Note that, with the notation introduced in Subsection \ref{SubsectionLPP}, we have $\LL=\LL(a,h)$. By Proposition \ref{propLPP} we then have that
\[
\beta_{i,i+D+1}(\LPP^{\ul{a}}(\b)) = \binom{2h}{i+1} - \binom{h}{i+1}
\]
and
\[
\beta_{i,i+D}(\LPP^{\ul{a}}(\b)) = \begin{cases} \binom{h}{i} & \text{ if } i \leq h-1 \\ \\
0 & \text{ otherwise.}
\end{cases}
\]
Moreover, choosing $a \gg h$ guarantees that $\beta_{i,i+j}(\LPP^{\ul{a}}(\b)) = \beta_{i,i+j}(\LL) = \beta_{i,i+j}(J) = 0$ for all $(i,j)$ with $i \geq 0$ and $j=D-1$, and for all $(i,j)$ with $i \geq h$ and $j \leq D$. Since $\beta_{i,i+D}(\a)=\beta_{i-1,i}(I)=0$ for all $i \geq 1$, using \cite{MerminMurai} and cancellation properties for initial ideals, we must have a consecutive cancellation between $\beta_{i+1,i+D}(\LL)$ and $\beta_{i,i+D+1}(\LL)$ for all $i \geq 0$. In particular, we see that for all $i \geq 0$ we must have
\[
\beta_{i,i+1+D}(\a) \leq \beta_{i,i+1+D}(\LL) - \beta_{i+1,i+D}(\LL) = \begin{cases}
\binom{2h}{i+1} - 2\binom{h}{i+1} & \text{ if } i \ne h-1  \\ \\
\binom{2h}{h} - 1 & \text{ if } i=h-1.
\end{cases} 
\]
Using Lemma \ref{lemmaACI}, for $i \geq 0$ we get
\[
\beta_{i,i+2}(I) = \begin{cases} \beta_{i+1,i+2+D}(\a) \leq \binom{2h}{i+2} - 2 \binom{h}{i+2} & \text{ if } i \ne h-2 \\ \\
\beta_{h-1,h+D}(\a) - 1 \leq \binom{2h}{h} -2 = \binom{2h}{h} - 2\binom{h}{h} & \text{ if }i=h-2.
\end{cases}
\]
In any case, we conclude that $\beta_{i,i+2}(I) \leq \binom{2h}{i+2}-2\binom{h}{i+2}$, as desired.
 \end{proof}
 
\begin{remark} \label{remLinear} We would like to remark that, in Theorem \ref{mainthm}, the assumption that $I$ contains no linear forms can be removed. In fact, suppose that $t=\HF(I;1)>0$. Then, after possibly performing a change of coordinates we may assume that $I=I'+(x_{N-t+1},x_{N-t+2},\ldots,x_N)$ for some ideal $I'$ which is still homogeneous, unmixed, radical, of height $h-t$ and containing no linear forms. Let $I''=(x_{N-t+1},x_{N-t+2},\ldots,x_N)$. Since $S/I = S/I' \otimes_S S/I''$ and $\Tor_1^S(S/I',S/I'')=0$, by rigidity of $\Tor$ over $S$ and because $S/I''$ is resolved by a Koszul complex we obtain that 
\[
\beta_{i,i+1}(S/I) = \sum_{j=0}^i \beta_{j,j+1}(S/I') \beta_{i-j,i-j}(S/I'') = \sum_{j=0}^i \beta_{j,j+1}(S/I') \binom{t}{i-j}.
\]
By Theorem \ref{mainthm} we have that $\beta_{j,j+1}(S/I') \leq \binom{2(h-t)}{j+1} - 2\binom{h-t}{j+1}$ for all $j \geq 1$. 
A direct calculation now shows that
\begin{align*} 
\beta_{i,i+1}(S/I) & \leq \sum_{j=1}^i \left[\binom{2(h-t)}{j+1} - 2\binom{h-t}{j+1}\right]\binom{t}{i-j} \\
& = \sum_{j=2}^{i+1} \binom{2(h-t)}{j} \binom{t}{i+1-j} - 2 \sum_{j=2}^{i+1} \binom{h-t}{j} \binom{t}{i+1-j} \\
& = \left[\binom{2h-t}{i+1} - \binom{t}{i+1} - 2(h-t)\binom{t}{i}\right] - 2 \left[\binom{h-t}{i+1} - \binom{t}{i+1} - (h-t)\binom{t}{i}\right] \\
& = \binom{2h-t}{i+1}  + \binom{t}{i+1} - 2 \binom{h}{i+1}\\
& \leq \binom{2h}{i+1}-2\binom{h}{i+1}.
\end{align*}
 \end{remark}
 
\begin{remark}
We conclude this section by noting that, although a similar strategy could in principle be employed to obtain bounds for higher strands of the resolution, it yields estimates that are significantly weaker than those established in Theorem \ref{mainthm} for the quadratic strand. Indeed, the proof of Theorem \ref{mainthm} relies crucially on controlling cancellations within the resolution of the initial ideal and its LPP ideal. While this process is manageable for the quadratic strand, it rapidly becomes intractable for higher strands.
\end{remark}

\section{A prime ideal achieving the upper bound}
The goal of this section is to show that the bounds obtained in Theorem \ref{mainthm} are sharp and, in fact, are achieved by an ideal which is not only radical, but even prime.

\subsection{A radical example} \label{Subsection radical} Let $K$ be a field, and $S=K[u_1,\ldots,u_h,v_1,\ldots,v_h]$. Let $\Gamma(h):=(u_1,\ldots,u_h) (v_1,\ldots,v_h) = (u_1,\ldots,u_h) \cap (v_1,\ldots,v_h)$. From the short exact sequence
\[
\xymatrix{
0 \ar[r] &  S/\Gamma(h) \ar[r] & \displaystyle \frac{S}{(u_1,\ldots,u_h)} \oplus \frac{S}{(v_1,\ldots,v_h)} \ar[r] & \displaystyle \frac{S}{(u_1,\ldots,u_h,v_1,\ldots,v_h)} \cong K \ar[r] & 0 
}
\]
we deduce that $\HS_{S/\Gamma(h)}(z) = \frac{2}{(1-z)^h} - 1$, so that $\HS_{\Gamma(h)}(z) = \frac{(1-z)^{2h}-2(1-z)^h+1}{(1-z)^{2h}}$.

For $i \geq 0$ let $\beta_i=\beta_i(\Gamma(h))$. Since $\Gamma(h)$ is generated by quadrics, and it has linear resolution (e.g., see \cite[Theorem 3.1]{ConcaHerzog}), we have 
\[
\HS_{\Gamma(h)}(z) = \sum_{i \geq 0}(-1)^i \frac{\beta_i z^{i+2}}{(1-z)^{2h}} = \left(\sum_{i \geq 0} (-1)^i \beta_iz^{i+2}\right) \HS_S(z).
\]
Replacing $z$ with $-z$, and using the previous information, we obtain 
\[
\sum_{i \geq 0} \beta_iz^{i+2} = \frac{\HS_{\Gamma(h)}(-z)}{\HS_S(-z)} = (1+z)^{2h}-2(1+z)^h+1.
\]
In particular, the $i$-th Betti number of $\Gamma(h)$, i.e., the coefficient of $z^{i+2}$, is $\binom{2h}{i+2} - 2 \binom{h}{i+2}$. 

\subsection{A prime example}
Let $R=\RR[x_j,ix_j \mid j=1,\ldots,h]$, with $i^2=-1$ the imaginary unit. Note that $R$ is a domain, and that $\dim(R)=h$ since it is integral over $\RR[x_1,\ldots,x_h]$. If we let $S=\RR[y_j,z_j \mid j=1,\ldots,h]$, then the map $y_j \mapsto x_j$ and $z_j \mapsto ix_j$ gives a presentation $R \cong S/\p(h)$ for some prime ideal $\p(h)$ of $S$ of height $h$. Consider the ideal 
\[
I=(y_jz_k-y_kz_j \mid 1 \leq j < k \leq h) + (y_jy_k+z_jz_k \mid 1 \leq j \leq k \leq h).
\]
A direct calculation shows that $I \subseteq \p(h)$. Since $y_1^2+z_1^2,\ldots,y_h^2+z_h^2$ forms a regular sequence inside $I$, we must have $\Ht(I)=h$ and $\p(h)$ must then be a minimal prime over $I$. Now let $T=\CC[y_j,z_j \mid j=1,\ldots,h] \cong S \otimes_\RR \CC$ be the base change to $\CC$. Performing the change of coordinates $y_j+iz_j \mapsto u_j$ and $y_j-iz_j \mapsto v_j$, we see that 
\begin{align*}
IT & \cong(u_jv_k-u_kv_j \mid 1 \leq j<k \leq h) + (u_jv_k + u_kv_j \mid 1 \leq j \leq k \leq h) \\
 & = (u_jv_k \mid 1 \leq j,k \leq h) = (u_1,\ldots,u_h)  (v_1,\ldots,v_h).
\end{align*}
Since the multiplicity does not change after extending scalars, we see that $\e(S/I) = \e(T/IT) = 2$. Moreover, since $T/IT$ is unmixed, so must be $S/I$. If we had $I \subsetneq \p(h)$, then $\e(S/\p(h))=1$ would be forced, and $S/\p(h) \cong R$ would then be regular. But this is a contradiction since $R$ is standard graded of dimension $h$, and there are no $\RR$-linear relations between the $2h$ generators of $R$ as an $\RR$-algebra. Hence $I=\p(h)$. 

Since the  graded Betti numbers do not change if we extend scalars, the results of this section yield the following conclusion.
\begin{corollary} \label{corollPrime} For any $h \geq 1$ there exists a homogeneous prime ideal $\p(h)$ of height $h$ with
\[
\beta_{i,i+2}(\p(h)) = \binom{2h}{i+2} - 2 \binom{h}{i+2}
\]
for all $i \geq 0$. In particular, the upper bounds established in Theorem \ref{mainA} are sharp within the class of prime ideals.
\end{corollary}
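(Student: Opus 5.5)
The plan is to take $\p(h)$ to be the prime ideal constructed in the previous subsection, namely the kernel of the presentation $S=\RR[y_j,z_j \mid j=1,\ldots,h] \to R=\RR[x_j,ix_j \mid j=1,\ldots,h]$. We have already shown there that $\p(h)$ is prime of height $h$ and equals the ideal $I$ generated by the explicit quadrics $y_jz_k-y_kz_j$ and $y_jy_k+z_jz_k$. What remains is to compute its graded Betti numbers. The argument goes in three steps, in this order.

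\textbf{Step 1 (base change).} I would use that graded Betti numbers are invariant under field extension. Since $\CC$ is flat over $\RR$, tensoring a minimal graded free resolution of $S/\p(h)$ over $S$ with $\CC$ gives a minimal graded free resolution of $T/\p(h)T$ over $T=S\otimes_\RR\CC$. Minimality is preserved because the entries of the differentials still lie in the homogeneous maximal ideal. Hence $\beta_{i,j}^S(\p(h))=\beta_{i,j}^T(\p(h)T)$ for all $i,j$.

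\textbf{Step 2 (change of coordinates).} Next I would apply the linear change of variables $u_j=y_j+iz_j$, $v_j=y_j-iz_j$. This is an invertible graded automorphism of $T$ and so does not affect Betti numbers. The computation displayed earlier shows that it carries $\p(h)T=IT$ to $(u_1,\ldots,u_h)(v_1,\ldots,v_h)=\Gamma(h)$. Concretely, $y_jy_k+z_jz_k$ and $y_jz_k-y_kz_j$ become, up to nonzero scalars, $u_jv_k+u_kv_j$ and $u_jv_k-u_kv_j$. Since $2$ is invertible, these span the same space as all the $u_jv_k$.

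\textbf{Step 3 (Betti numbers of $\Gamma(h)$).} By Subsection \ref{Subsection radical}, $\Gamma(h)$ has a linear resolution. Its Hilbert series gives $\beta_{i,i+2}(\Gamma(h))=\binom{2h}{i+2}-2\binom{h}{i+2}$, and $\beta_{i,j}=0$ for $j\neq i+2$. Combining this with Steps 1 and 2 yields the formula for $\p(h)$. Sharpness then follows immediately from Theorem \ref{mainthm}, together with Remark \ref{remLinear} for ideals that contain linear forms. For $h=1$ the construction still works: $\p(1)=(y_1^2+z_1^2)$ has a single quadric generator, which matches $\binom{2}{2}-2\binom{1}{2}=1$.

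The only delicate point is making sure that $\p(h)$ is indeed equal to $I$ and not strictly larger. This was settled in the previous subsection via the multiplicity argument $\e(S/I)=2$, so I expect no real obstacle. The rest is the standard flat base change.
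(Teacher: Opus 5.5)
Your proposal is correct and follows the paper's argument. The paper likewise identifies $\p(h)$ with $I$ through the multiplicity argument, extends scalars to $\CC$ (which preserves graded Betti numbers), changes coordinates to obtain $\Gamma(h)$, and reads off $\beta_{i,i+2}$ from the linear resolution and Hilbert series of $\Gamma(h)$.
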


\bibliographystyle{abbrv}
\bibliography{References}

\end{document}